\DeclareMathOperator{\Wr}{Wr}
\DeclareMathOperator{\Real}{Re}
\DeclareMathOperator{\Imag}{Im}
\renewcommand{\Re}{\Real}
\renewcommand{\Im}{\Imag}
\newcommand{\cH}{\mathcal{H}}
\newtheorem{theorem}{Theorem}[section]
\newtheorem{lemma}[theorem]{Lemma}
\newtheorem{proposition}[theorem]{Proposition}
\newtheorem{corollary}[theorem]{Corollary}
\newtheorem{conjecture}[theorem]{Conjecture}
\numberwithin{equation}{section}
\begin{document}

\author{A.B.J. Kuijlaars\footnote{Department of Mathematics, Katholieke Universiteit Leuven, Belgium, email: arno.kuijlaars@wis.kuleuven.be}
\, and  R. Milson\footnote{Department of Mathematics and Statistics, Dalhousie University, Halifax, Canada,
email: rmilson@dal.ca}}
\title{Zeros of exceptional Hermite polynomials}
\date{\today}

\maketitle
\begin{abstract} 
  We study the zeros of exceptional Hermite polynomials associated
  with an even partition $\lambda$. We prove several conjectures
  regarding the asymptotic behaviour of both the regular (real) and the
  exceptional (complex) zeros.  The real zeros are distributed as the
  zeros of usual Hermite polynomials and, after contracting by a
  factor $\sqrt{2n}$, we prove that they follow the semi-circle law.
  The non-real zeros tend to the zeros of the generalized Hermite
  polynomial $H_{\lambda}$, provided that these zeros are simple. It
  was conjectured by Veselov that the zeros of generalized Hermite
  polynomials are always simple, except possibly for the zero at the
  origin, but this conjecture remains open.
\end{abstract}


\section{Introduction}
The field of classical orthogonal polynomials is essentially the study
of Sturm-Liouville problems with polynomial solutions.  Indeed, by the
well-known theorem of Bochner, if we assume that a Sturm-Liouville
problem admits an eigenpolynomial of \emph{every} degree, the we
arrive at the well-known families of Hermite, Laguerre, Jacobi, and
Bessel (if signed weights are allowed).  Exceptional orthogonal
polynomials arise when we consider Sturm-Liouville problems with
polynomial eigenfunctions, but allow a finite number of degrees to be
missing from the corresponding degree sequence.  For background on
exceptional orthogonal polynomials and exact solutions in quantum
mechanics, see \cite{GKM2,Sa14}. For recent developments in the area
of exceptional Hermite polynomials, see \cite{duran,GGM}.

The study of the zeros of exceptional orthogonal polynomials has
attracted some recent interest.  Preliminary results indicate that
there are strong parallels with the behaviour of zeros of classical
orthogonal polynomials. For example, it is possible to describe the
zeros of certain exceptional OP using an electrostatic interpretation
\cite{dimlun14,hor14}.  Asymptotic behaviour of the zeros of 1-step
Laguerre and Jacobi exceptional polynomials as the degree $n$ goes to
infinity was considered in \cite{GMM,HoSasaki,LLMS}. 

All known families of exceptional OP have a weight of the form
\begin{equation}
  \label{eq:xop-weight}
   W(x) = \frac{W_0(x)}{\eta(x)^2} 
\end{equation}
where $W_0(x)$ is a classical OP weight and where $\eta(x)$ is a
certain polynomial whose degree is equal to the number of gaps in the
XOP degree sequence, and which doesn't vanish on the domain of
orthogonality. It has recently been shown \cite{MGGM} that the weight
indeed takes the above form for every exceptional OP family.

The zeros of exceptional orthogonal polynomials are
divided into two groups according to whether they lie in the domain of
orthogonality. The \emph{regular} zeros are of this type and enjoy the
usual intertwining behaviour common to solutions of all
Sturm-Liouville problems.  All other types of zeros are called
\emph{exceptional zeros}.  For sufficiently high degree $n$, the
number of exceptional zeros is precisely equal to the degree of
$\eta(x)$.  Based on all extant investigations of the asymptotics of
the zeros of exceptional OP it is reasonable to formulate the
following.
\begin{conjecture} \label{conj:xzeros}
  The regular zeros of exceptional OP have the same asymptotic
  behaviour as the zeros of their classical counterpart.  The
  exceptional zeros converge to the zeros of the denominator
  polynomial $\eta(x)$.
\end{conjecture}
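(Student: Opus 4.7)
The plan is to split Conjecture \ref{conj:xzeros}, for exceptional Hermite polynomials, into a statement about the real zeros and a statement about the complex zeros, and to attack each through the Wronskian representation of the exceptional polynomial in terms of classical Hermite polynomials $H_k$, combined with the Plancherel--Rotach asymptotics of those $H_k$. Writing $\eta(x) = \Wr[H_{\lambda_1},\ldots,H_{\lambda_r}](x)$ and the exceptional polynomial $y_n(x)$ as a Wronskian of this family together with $H_n$, the large-$n$ behaviour is governed by the classical Hermite polynomials of degrees near $n$, while $\eta(x)$ enters only as an $n$-independent factor and in the denominator through the weight.

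For the semicircle law on the real zeros, I would rescale $x = \sqrt{2n}\,z$ and show that the normalised zero-counting measure converges weakly to the Wigner semicircle. The cleanest route is via logarithmic potential theory: compute $\tfrac{1}{n}\log|y_n(\sqrt{2n}\,z)|$ pointwise for $z \in \mathbb{C}\setminus[-1,1]$ using the Wronskian expansion and Plancherel--Rotach, and observe that the factors $H_{\lambda_j}(\sqrt{2n}\,z)$ and $\eta(\sqrt{2n}\,z)$ each contribute $O(\log n)$ and hence $0$ to the normalised potential. Thus the limit potential equals that of the classical Hermite case, and inversion of the potential transform identifies the limiting counting measure as the semicircle on $[-1,1]$. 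The $\deg\eta$ exceptional zeros remain in a bounded set (established in the next step), so after rescaling they contribute at most an atom at the origin, which vanishes in the weak limit.

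For the exceptional zeros, I would use a Hurwitz/Rouch\'e argument on an asymptotic factorisation of the form
\begin{equation*}
  y_n(x) = c_n\bigl(\eta(x)\,P_n(x) + R_n(x)\bigr)
\end{equation*}
on compact subsets of $\mathbb{C}$, where $P_n$ is a polynomial whose zeros lie (asymptotically) on the real axis and obey the semicircle law, and where $R_n/(\eta P_n) \to 0$ uniformly on compacta avoiding the zeros of $\eta P_n$. Near a simple zero $z_0$ of $\eta$ one has $P_n(z_0)\ne 0$ for large $n$, so the dominant term $\eta P_n$ has exactly one zero in a small disk about $z_0$; Rouch\'e then forces $y_n$ to contribute exactly one exceptional zero there, which must converge to $z_0$. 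A global count accounts for all $\deg\eta$ exceptional zeros and simultaneously confirms their boundedness used above.

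The principal obstacle is precisely the simplicity hypothesis on the zeros of $\eta = H_\lambda$: at a zero of order $m\ge 2$, the leading term $\eta P_n$ vanishes to order $m$, and Rouch\'e only gives a cluster of $m$ exceptional zeros converging to $z_0$ without pinning down their individual limits or rates. Resolving this would demand a finer local expansion of the Wronskian at $z_0$, which is where one runs into Veselov's open conjecture. A secondary technical point is to make the Plancherel--Rotach asymptotics, and the resulting expansion of $y_n$, uniform on compact subsets of $\mathbb{C}$ that contain the (generically complex) zeros of $\eta$, since the standard asymptotics for $H_n$ off the real line require separate treatment in different Stokes regions.
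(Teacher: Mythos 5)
Your outline is workable, but it follows a genuinely different route from the paper on both halves of the conjecture. For the real zeros the paper does not touch Plancherel--Rotach asymptotics or potential theory at all: it shows (Lemma \ref{lem:lincombo}) that $P_n$ lies in the span of the consecutive classical Hermite polynomials $H_{n-|\lambda|-r},\ldots,H_n$, and then invokes the Beardon--Driver interlacing theorem to place a zero of $P_n$ in all but at most $|\lambda|+r$ of the gaps between consecutive zeros of $H_n$; the semicircle law is then inherited directly from the classical case. This is far more elementary than your potential-theoretic argument, which in any case needs extra care: in the expansion $P_n=\sum_j \tilde Q_j H_{n-j}$ the terms $\tilde Q_j(\sqrt{2n}\,z)H_{n-j}(\sqrt{2n}\,z)$ are all of the \emph{same} order in $n$ for $z\notin[-1,1]$ (the growth of $\deg\tilde Q_j$ exactly compensates the drop in Hermite degree), so you must rule out cancellation, or retreat to $L^1_{\mathrm{loc}}$/quasi-everywhere convergence of the potentials, and you must separately establish tightness of the rescaled zero-counting measures --- points the interlacing argument gets for free. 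For the exceptional zeros the paper's mechanism is entirely different from Rouch\'e: it proves that $P_n^2e^{-x^2}/H_\lambda^2$ has vanishing residues at the zeros of $H_\lambda$ (via an inductive Darboux factorization), which yields the electrostatic balance \eqref{eq:zerobalance}; since $\sim c\sqrt n$ real zeros of $P_n$ accumulate near $\Re z_j$, the imaginary part of the real-zero sum blows up like $\sqrt n$ and forces one exceptional zero into a disk of radius $O(1/\sqrt n)$ tangent to $z_j$. Your Rouch\'e argument on the dominant Wronskian term $H_\lambda H^{(r)}_{n-|\lambda|+r}$ is also viable --- the needed domination of the lower-order terms follows from fixed-point (Szeg\H{o} 8.22.6-type) asymptotics of $H_m$ at non-real $z$, not from the Mehler--Heine scaling used in the paper, and you are right that this uniformity must be checked --- and it actually buys something the paper's method does not: at a zero of $H_\lambda$ of multiplicity $m$, Rouch\'e still yields exactly $m$ exceptional zeros converging to it, so the convergence statement of the conjecture (though not individual rates) would survive without Veselov's simplicity hypothesis, whereas the residue argument genuinely needs simplicity. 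Conversely, the paper's method delivers the explicit $O(1/\sqrt n)$ rate and localization of Theorem \ref{thm:xzeros}.
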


The first part of this conjecture admits two useful interpretations.
The simplest interpretation is that after suitable normalization, the
$k$-th regular zero of the exceptional polynomials converges to the
$k$-th zero of their classical counterpart.  Such asymptotic behaviour
can be proved by means of Mehler-Heine type theorems, as was done for
the case of certain exceptional Laguerre and Jacobi polynomials in \cite{GMM}
and \cite{LLMS}.

However, there is another way to formulate this conjecture.  It is
well known that as the degree goes to infinity, the counting measure
for the zeros of classical orthogonal polynomials, suitably
normalized, tends to a certain equilibrium measure.  The conjecture
then is that the normalized counting measure of the regular zeros of
exceptional orthogonal polynomials converges to the same equilibrium
measure.

In this paper, we partially prove the conjecture for the class of
exceptional Hermite polynomials \cite{GGM}.  Exceptional Hermite polynomials are
Wronskians of $r+1$ classical Hermite polynomials, where $r$ of the
polynomials are fixed and where the degree of the last polynomial
varies.  The precise statement of the results is given in Section
\ref{sect:XHermite} after some necessary definitions.   The proofs for
the real zeros are given in Sections \ref{sect:MehlerHeine} and  \ref{sect:reg-zeros}, and for the
exceptional zeros in Section \ref{sect:exc-zeros}.

\section{Exceptional Hermite polynomials} \label{sect:XHermite}

A partition $\lambda$ of length $r$ is a finite weakly decreasing sequence of positive integers
\[ \lambda = (\lambda_1 \geq \lambda_2 \geq \cdots \geq \lambda_r \geq 1), \] 
The number $|\lambda| = \sum_{j=1}^r \lambda_j$ is the size of
the partition.  The generalized Hermite polynomial associated with $\lambda$ is
\begin{equation}
  \label{eq:Hlambda}
  H_{\lambda} = \Wr \left[ H_{\lambda_r}, \ldots, H_{\lambda_2 + r-2}, H_{\lambda_1 + r-1} \right] 
\end{equation}
where $\Wr$ denotes the Wronskian determinant. The degree of $H_{\lambda}$ is
\[ \deg H_{\lambda} = |\lambda|. \] 
Alternatively, we may also index the generalized Hermite polynomial by means of the 
strictly decreasing sequence $k_1 > k_2 > \cdots > k_r$ where $k_j = \lambda_j+r-j$,
and then 
$H_{\lambda} = \Wr \left[H_{k_r}, \ldots, H_{k_2}, H_{k_1} \right]$.

We call $\lambda$ an even (or double) partition if $r$ is even and
$\lambda_{2k-1} = \lambda_{2k}$ for $k=1,2, \ldots, r/2$. In that case
it is known that $H_{\lambda}$ has no zeros on the real line  
\cite{Adler,GGM,Krein}, and hence
\begin{equation}
  \label{eq:Wlambdadef}
  W_{\lambda}(x) = \frac{e^{-x^2}}{(H_\lambda(x))^2}, \qquad x\in (-\infty,\infty), 
\end{equation}
is a well-defined weight function on the real line.

For a fixed partition $\lambda$, set
\begin{equation} \label{eq:degseq}
	\mathbb N_{\lambda} = \{ n \geq |\lambda|-r \mid n \neq |\lambda|+\lambda_j-j \text{ for } j =1, \ldots, r\},
	\end{equation}
and, for $n \geq |\lambda|- r$ set
\begin{equation}
  \label{eq:Pndef}
  P_n = \Wr \left[ H_{\lambda_r},
    H_{\lambda_{r-1}+1},\ldots,H_{\lambda_2 + r-2},    H_{\lambda_1 + r-1}, H_{n-|\lambda|+r} \right].
\end{equation}
Then $P_n$ is a polynomial of degree $n$ if $n\in \mathbb N_\lambda$, and 
$P_n$ vanishes identically otherwise.  For this reason we call
$\mathbb N_{\lambda}$ the degree sequence for $\lambda$. The complement
$\mathbb N_0 \setminus \mathbb N_{\lambda}$ has the forbidden
degrees. Their number are exactly $|\lambda|$. The largest forbidden
degree is $|\lambda| + \lambda_1 -1$.

It can be shown \cite[Proposition 5.2]{GGM} that the above polynomial
$P_n$ satisfies the second-order differential equation 
\begin{equation}
  \label{eq:PnDE}
 P_n''(x)-2\left(x+\frac{H_\lambda'}{H_\lambda} \right) P_n'(x)+
\left(\frac{H_\lambda''}{H_\lambda}+ 2 x
  \frac{H_\lambda'}{H_\lambda}+2n-2|\lambda| \right) P_n(x) = 0.
\end{equation}
Equivalently, we can say that $\varphi_n(x) = \frac{P_n(x)}{H_{\lambda}(x)} e^{-\frac{1}{2} x^2}$ is
an eigenfunction of the differential operator
\begin{equation} \label{eq:Doplambda} 
	- y'' + \left(x^2 - 2 \frac{d^2}{dx^2}  \log H_{\lambda}\right) y 
	\end{equation}
with eigenvalue $2n - 2 |\lambda|+1$.
As a consequence, we can express the following weighted product as a perfect
derivative: 
\begin{equation} \label{eq:PnPm}
	P_n(x) P_m(x) W_{\lambda}(x) = \frac{d}{dx} \left[ \frac{P_n(x)
    P_m'(x) - P_n'(x) P_m(x)}{2(n-m)} W_{\lambda}(x) \right], \quad n\neq m. \end{equation}
If $\lambda$ is an even partition, then \eqref{eq:PnPm} implies  the
remarkable orthogonality property
\begin{equation}
  \label{eq:Pnortho}
  \int_{-\infty}^{\infty} P_n(x) P_m(x) W_{\lambda}(x) dx = 0, \qquad
  n,m  \in \mathbb N_{\lambda}, \ n \neq m.   
\end{equation}
It is also known \cite{GMM,duran} that the closure of the linear span of 
$\{ P_n \}_{n\in {\mathbb  N}_\lambda}$ is dense  in the Hilbert space
$\mathrm L^2(\mathbb R,W_\lambda)$.  Because of this fact and because
of \eqref{eq:PnDE} and \eqref{eq:Pnortho}, the polynomials $P_n$, $n \in {\mathbb N}_\lambda$ 
are called exceptional Hermite polynomials.

By the Sturm oscillation theorem,  there are
\[ n-|\lambda| + \left| \{ j = 1, \ldots, r \mid \lambda_j-j \geq n - |\lambda|   \}\right| \] 
real zeros of $P_n$; we call these the \emph{regular}
zeros.  In particular, for $n\geq |\lambda|+\lambda_1$,  there are
exactly $n-|\lambda|$ regular (real) zeros.  For $n\geq |\lambda|+\lambda_1$, let
\begin{equation} \label{eq:realzeros} 
	x_{1,n}< x_{2,n} < \cdots < x_{n-|\lambda|,n} 
	\end{equation} denote the regular
zeros of $P_n$ arranged in ascending order.  Since $H_\lambda$ is an
even polynomial, equation \eqref{eq:PnDE} has parity invariance, and
consequently $P_n(x)$ has the same parity as $n$. 
The remaining $|\lambda|$ non-real zeros are called
\emph{exceptional} zeros; we will denote them as 
\begin{equation} \label{eq:Xzeros}
	z_{1,n}, z_{2,n}, \ldots, z_{|\lambda|,n}. 
\end{equation}  

We are now able to state our main results. Throughout the rest of the paper, $\lambda$ is a fixed
even partition. Our first result is the scaling limit of the central zeros of $P_n$.
\begin{theorem}
  \label{thm:zerospacing}
  For every fixed  $k \in \mathbb Z$ we have
  \begin{align}  \label{eq:evenzeros}
    \lim_{n \to \infty} 2 \sqrt{n}\, x_{k+n+1- \frac{|\lambda|}{2},2n} & =
    \frac{\pi}{2} + k \pi, \\ 
		\label{eq:oddzeros}
    \lim_{n \to \infty} 2 \sqrt{n}\, x_{k+n+1-\frac{|\lambda|}{2},2n+1} & = k \pi.
  \end{align}
\end{theorem}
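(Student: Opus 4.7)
The plan is to establish a Mehler--Heine type asymptotic formula for $P_N$ at the microscopic scale $x = u/(2\sqrt n)$, and then to read off the zero asymptotics via Hurwitz's theorem. Concretely, I aim to prove that, uniformly on compact subsets of $\mathbb{C}$,
\[
\frac{P_{2n}(u/(2\sqrt n))}{A_n}\longrightarrow H_\lambda(0)\cos u
\qquad\text{and}\qquad
\frac{P_{2n+1}(u/(2\sqrt n))}{B_n}\longrightarrow H_\lambda(0)\sin u
\]
(up to an overall sign) for suitable nonzero normalizations $A_n,B_n$.

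The starting point is the Wronskian representation $P_N = \Wr[H_{k_r},\ldots,H_{k_1},H_m]$ with $k_j = \lambda_j+r-j$ fixed and $m = N-|\lambda|+r\to\infty$, together with the classical Mehler--Heine formulas for Hermite polynomials,
\[
H_m(v/\sqrt{2m})/H_m(0) \to \cos v \quad (m\text{ even}),\qquad
\sqrt{2m}\, H_m(v/\sqrt{2m})/H_m'(0) \to \sin v \quad (m\text{ odd}),
\]
which hold uniformly on compact subsets of $\mathbb{C}$ and, being statements about entire functions, imply the same convergence for all derivatives in $v$ by Weierstrass's theorem. Since $\sqrt{2m}/(2\sqrt n)\to 1$, the two scales are interchangeable in the limit. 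Expanding the $(r+1)\times(r+1)$ Wronskian along its last column, the $i$-th row produces a chain-rule factor $(2\sqrt n)^i$ coming from $D^i H_m$, while the other $r$ columns, evaluated at $x=u/(2\sqrt n)\to 0$, tend to the constant matrix with entries $D^{i'}H_{k_j}(0)$. The row $i=r$ therefore dominates; its minor equals $\pm H_\lambda(0)$, and after rescaling by $(2\sqrt n)^r$ together with the normalization of $H_m$ one obtains $\pm H_\lambda(0)\,\varphi^{(r)}(u)$ with $\varphi=\cos$ if $N$ is even and $\varphi=\sin$ if $N$ is odd. Since $\lambda$ is even, $r$ is even, so $\varphi^{(r)}=\pm\varphi$, which gives the claim.

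Granted the Mehler--Heine limit, the zero statements follow from Hurwitz's theorem: the zeros of the entire functions $u\mapsto P_N(u/(2\sqrt n))$ in any bounded region of $\mathbb{C}$ converge to the (simple) zeros of $\cos u$, respectively $\sin u$. To match the indices, one uses that $P_{2n}$ is even with $2n-|\lambda|$ real zeros symmetric about $0$ and no zero at the origin, so that $x_{n+1-|\lambda|/2,2n}$ is the smallest positive real zero; similarly $P_{2n+1}$ is odd and $x_{n+1-|\lambda|/2,2n+1}=0$. Shifting by $k$ then yields exactly \eqref{eq:evenzeros} and \eqref{eq:oddzeros}.

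The principal technical hurdle is the careful bookkeeping of the $(2\sqrt n)^i$ factors in the Wronskian expansion, with uniform control of the subleading terms on compact subsets of $\mathbb{C}$, so that Hurwitz's theorem applies. A secondary point is to verify that in a fixed compact $u$-region the non-real exceptional zeros of $P_N$ stay away for all large $n$: since those zeros tend to zeros of $H_\lambda$ which lie at $x$-positions of order $1$, they correspond to $u$-positions of order $\sqrt n$ and hence leave every bounded set, so that Hurwitz picks up only the real zeros with the claimed indexing.
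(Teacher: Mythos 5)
Your proposal follows essentially the same route as the paper: expand the Wronskian in \eqref{eq:Pndef} along its last column, observe that the term containing $H_{n-|\lambda|+r}^{(r)}$ dominates because each derivative of the large-degree Hermite factor contributes a factor $2\sqrt{n}$ under the rescaling $x=u/(2\sqrt n)$ while the cofactors tend to their values at $0$ (the leading minor being $H_\lambda(0)$, nonzero since $\lambda$ is even), and then apply Hurwitz's theorem together with the parity of $P_N$ to identify the indices. This is exactly Proposition \ref{prop:MH} and its use in Section \ref{sect:MehlerHeine}.

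One point needs repair. To rule out interference from the $|\lambda|$ non-real zeros inside a fixed compact $u$-region, you argue that these zeros ``tend to zeros of $H_\lambda$'' and hence sit at $u$-positions of order $\sqrt n$. That fact is Theorem \ref{thm:xzeros}, which is proved later and only under the hypothesis that the relevant zeros of $H_\lambda$ are simple (in general it rests on Conjecture \ref{conj:simplezeros}); invoking it here is circular and, as stated, not even available unconditionally. The correct and purely local argument is the one implicit in the paper: since $\cos u$ (resp.\ $\sin u$) has only simple real zeros, Hurwitz gives exactly one zero of the rescaled $P_N$ in a small conjugation-symmetric disk about each such zero for large $n$; if that zero were non-real its conjugate would be a second zero in the same disk (the coefficients are real), a contradiction, so it is real, and no zeros of the rescaled polynomial survive elsewhere in the compact set. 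Hence the non-real zeros leave every bounded $u$-region without any appeal to Theorem \ref{thm:xzeros}. With that substitution your argument is complete and matches the paper's.
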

\noindent
Theorem \ref{thm:zerospacing} will follow from a Mehler-Heine asymptotic formula for $P_n$.

The next result is the weak scaling limit of the counting measure of
the real zeros.
\begin{theorem}
\label{thm:semicircle}
For every bounded continuous function $f$ on $\mathbb R$ we have
\begin{equation} \label{eq:sclimit1}
   \lim_{n\to \infty} \frac{1}{n} \sum_{j=1}^{n-|\lambda|} f \left(
  \frac{x_{j,n}}{\sqrt{2n}}\right) = \frac{2}{\pi} \int_{-1}^1 f(x)
\sqrt{1-x^2} dx. \end{equation}
\end{theorem}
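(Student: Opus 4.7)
I would prove the semicircle law by comparing $P_n$ globally with the classical Hermite polynomial $H_n$, whose scaled zero-counting measure obeys the analogous limit by a classical result. The Wronskian structure of $P_n$ introduces only polynomial-in-$n$ corrections relative to $H_n$, which are invisible at the logarithmic scale, so the zero-counting measures of $P_n$ and $H_n$ under the scaling $x \mapsto x/\sqrt{2n}$ must share the same weak limit.

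\textbf{Reduction to $H_n$.} A Laplace expansion of the Wronskian determinant in \eqref{eq:Pndef} along its last column, combined with the derivative identity $H_m^{(j)}(x) = 2^j \frac{m!}{(m-j)!} H_{m-j}(x)$, yields a representation
\begin{equation*}
P_n(x) = \sum_{j=0}^{r} a_{n,j}\, q_j(x)\, H_{\tilde n - j}(x), \qquad \tilde n = n-|\lambda|+r,
\end{equation*}
where the $q_j$ are fixed polynomials depending only on $\lambda$ and each $a_{n,j}$ is a polynomial in $n$ of degree at most $r$. By the classical Plancherel-Rotach asymptotics, $\frac{1}{n}\log|H_n(\sqrt{2n}\,z)| \to g(z)$ locally uniformly on $\mathbb{C}\setminus[-1,1]$, where $g$ is the logarithmic potential (with external field $z^2$) associated with the semicircle measure $\mu_{sc}(dx) = \frac{2}{\pi}\sqrt{1-x^2}\,dx$ on $[-1,1]$. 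The same limit holds for each $H_{\tilde n - j}$ since $j \le r$ is fixed. Combined with the polynomial-in-$n$ bound on the prefactors $a_{n,j}\,q_j(\sqrt{2n}\,z)$, and using that $\deg P_n = n$ to rule out leading-order cancellation, this gives $\frac{1}{n}\log|P_n(\sqrt{2n}\,z)| \to g(z)$ locally uniformly on $\mathbb{C}\setminus[-1,1]$.

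\textbf{Conclusion.} A standard potential-theoretic principle (lower-envelope theorem; see Saff-Totik) converts this logarithmic convergence into weak convergence of the normalized counting measure of all $n$ zeros of $z \mapsto P_n(\sqrt{2n}\,z)$ to $\mu_{sc}$. The $|\lambda|$ exceptional zeros lie in a bounded region (as established in Section~\ref{sect:exc-zeros}); after scaling by $1/\sqrt{2n}$ they collapse to $0$ and contribute $O(|\lambda|/n) = o(1)$ to the empirical sum against a bounded continuous $f$, so the regular zeros alone satisfy \eqref{eq:sclimit1}.

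\textbf{Main obstacle.} The key technical hurdle is the logarithmic comparison: one must control the Plancherel-Rotach asymptotics simultaneously for the $r+1$ shifted indices $\tilde n - j$ and rule out any large-scale cancellation among the terms in the Wronskian expansion. A conceptually cleaner but heavier alternative would apply semiclassical WKB directly to the Schrödinger equation for $\varphi_n = e^{-x^2/2}P_n/H_\lambda$ coming from \eqref{eq:Doplambda}; since the even-partition hypothesis forces $H_\lambda$ to have no real zeros, the potential $x^2 - 2(\log H_\lambda)''$ differs from the harmonic oscillator by a bounded function on $\mathbb{R}$, and the semicircle then emerges directly from the Weyl density.
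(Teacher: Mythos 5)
Your overall strategy is sound in outline and genuinely different from the paper's, but as written it has a real gap at precisely the point you flag as the ``main obstacle,'' and the justification you offer there does not close it. After the substitution $x=\sqrt{2n}\,z$, all $r+1$ terms $a_{n,j}\,q_j(x)H_{\tilde n-j}(x)$ in your expansion have the \emph{same} degree $n$ and the \emph{same} exponential order $e^{n(\phi(z)+o(1))}$ for $z\in\mathbb C\setminus[-1,1]$. The observation that $\deg P_n=n$ only prevents the leading coefficients from cancelling, i.e.\ it controls the behaviour at $z=\infty$; it says nothing about cancellation at the exponential scale at points $z$ near $[-1,1]$. Cancellation there, on a set of positive capacity, is exactly the mechanism by which zeros of $P_n(\sqrt{2n}\,\cdot)$ could accumulate off $[-1,1]$ with positive density, so the step is close to circular. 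The gap is repairable, but it needs a genuine dominant-term analysis rather than a degree count: using $a_{n,j}\asymp n^j$, $q_j(\sqrt{2n}z)\asymp n^{(|\lambda|+j-r)/2}$, and the ratio asymptotics $H_{m-1}(\sqrt{2n}z)/H_m(\sqrt{2n}z)\sim \bigl(\sqrt{2n}\,(z+\sqrt{z^2-1})\bigr)^{-1}$, one checks that the $j=r$ term (the one with coefficient $H_\lambda$) exceeds each of the others by a factor of order $n$, uniformly on compact subsets of $\mathbb C\setminus[-1,1]$ --- this is the exterior analogue of the computation in the proof of Proposition \ref{prop:MH}. Two smaller issues: $\frac1n\log|H_n(\sqrt{2n}z)|$ diverges as stated and must be normalized by the leading coefficient before it converges to a potential; and Section \ref{sect:exc-zeros} establishes localization of the exceptional zeros only near \emph{simple} zeros of $H_\lambda$, so you cannot cite it for unconditional boundedness (though for your purposes only the count $|\lambda|=O(1)$ of non-real zeros matters, not their location).

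The paper's route avoids all asymptotic analysis of $P_n$. From the same Wronskian expansion it deduces only that $\int x^kP_n(x)e^{-x^2}dx=0$ for $k\le n-|\lambda|-r-1$, hence that $P_n$ lies in the span of $H_{n-s},\dots,H_n$ with $s=|\lambda|+r$ (Lemma \ref{lem:lincombo}); the Beardon--Driver theorem (Theorem \ref{thm:BD}) then places a zero of $P_n$ in all but at most $s$ of the gaps between consecutive zeros of $H_n$. Since the scaled Hermite zeros obey the semicircle law and $f$ is bounded, deleting or adding $O(1)$ many terms from the normalized sum is harmless, and \eqref{eq:sclimit1} follows. Your approach, once the cancellation issue is settled, would deliver strictly more (locally uniform exterior asymptotics, hence quantitative control of where zeros can and cannot lie), but at the cost of Plancherel--Rotach machinery that the interlacing argument renders unnecessary for the weak limit alone.
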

\noindent
Theorem \ref{thm:semicircle} says that the normalized counting measure of the real
zeros, when scaled by a factor $\sqrt{2n}$, tends weakly to the measure with density $\frac{2}{\pi} \sqrt{1-x^2}$
on $[-1,1]$. This measure is known as the semicircle law. The corresponding result for the zeros of 
the Hermite polynomial $H_n$ is well-known, see e.g.\ \cite{deift}.

Our final result deals with the non-real zeros. We use $z_1, \ldots, z_{|\lambda|}$ to
denote the zeros of $H_{\lambda}$.

\begin{theorem} \label{thm:xzeros}
 Let $z_j$ be a simple zero of $H_{\lambda}$.
 Then there exists a constant $C > 0$ such that for all $n$ large enough,
there is a zero $z_{k,n}$ of $P_n$ such that
\[ |z_j - z_{k,n}| \leq \frac{C}{\sqrt{n}}. \]
\end{theorem}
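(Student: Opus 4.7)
The plan is to rewrite $P_n$ as a small perturbation of $H_\lambda$ on a fixed disk about $z_j$, and then apply Rouch\'e's theorem to produce a nearby zero. Set $m=n-|\lambda|+r$, and expand the $(r+1)\times(r+1)$ Wronskian in \eqref{eq:Pndef} by Laplace expansion along the last column (whose entries are $H_m,H_m',\ldots,H_m^{(r)}$):
\[
P_n(z) \;=\; H_\lambda(z)\,H_m^{(r)}(z)\;+\;\sum_{i=0}^{r-1}(-1)^{r+i}\,D_i(z)\,H_m^{(i)}(z),
\]
where $D_i(z)$ is the $r\times r$ minor obtained by deleting the $i$-th derivative row from the first $r$ columns; each $D_i$ is a polynomial in $z$ that is independent of $n$, and the $i=r$ minor is precisely $H_\lambda$. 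Since $H_m^{(r)}=2^r\tfrac{m!}{(m-r)!}H_{m-r}$ has only real zeros while $\Im z_j\neq 0$, we may fix a closed disk $\overline{D}_{\rho_0}(z_j)$ disjoint from $\mathbb{R}$ on which $H_m^{(r)}$ is nonvanishing for all $m$, and write $P_n/H_m^{(r)}=H_\lambda+R_n$ on such a disk.

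The heart of the proof is to show that $\sup_{\overline{D}_{\rho_0}(z_j)}|R_n|=O(n^{-1/2})$. Using $H_m^{(k)}=2^k\tfrac{m!}{(m-k)!}H_{m-k}$, one has for $0\le i\le r-1$,
\[
\frac{H_m^{(i)}(z)}{H_m^{(r)}(z)} \;=\; 2^{i-r}\,\frac{(m-r)!}{(m-i)!}\,\frac{H_{m-i}(z)}{H_{m-r}(z)},
\]
whose factorial prefactor is of order $m^{-(r-i)}$. For the Hermite ratio at fixed nonreal $z$, either the Plancherel--Rotach asymptotic (in the oscillatory regime, analytically continued off the real axis) or a direct analysis of the three-term recurrence $H_{k+1}(z)=2zH_k(z)-2kH_{k-1}(z)$ shows that $H_k(z)/H_{k-1}(z)\sim\pm i\sqrt{2k}$ (with sign depending on $\mathrm{sgn}\,\Im z$), so $|H_{m-i}(z)/H_{m-r}(z)|$ grows like $(2m)^{(r-i)/2}$. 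Combining yields $|H_m^{(i)}(z)/H_m^{(r)}(z)|=O(m^{-(r-i)/2})$ uniformly on $\overline{D}_{\rho_0}(z_j)$, and since each $D_i$ is a fixed polynomial, the dominant $i=r-1$ term yields $\sup_{\overline{D}_{\rho_0}(z_j)}|R_n|\le K/\sqrt n$ for some constant $K$ and all large $n$.

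Since $z_j$ is a simple zero of $H_\lambda$, there is $c>0$ with $|H_\lambda(z)|\ge c|z-z_j|$ on $\overline{D}_{\rho_0}(z_j)$ (after shrinking $\rho_0$ if necessary). Setting $C:=2K/c$ and $\rho:=C/\sqrt n$, we have $\rho\le\rho_0$ for all $n$ large enough, and on the circle $|z-z_j|=\rho$ the estimates give
\[
|H_\lambda(z)|\;\ge\;c\rho\;=\;2K/\sqrt n\;>\;K/\sqrt n\;\ge\;|R_n(z)|.
\]
By Rouch\'e's theorem applied to $H_\lambda$ and $H_\lambda+R_n=P_n/H_m^{(r)}$ on this circle, $P_n/H_m^{(r)}$ has a zero in the open disk $|z-z_j|<\rho$. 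Because $H_m^{(r)}$ is nonvanishing there, this is a zero $z_{k,n}$ of $P_n$ with $|z_{k,n}-z_j|<C/\sqrt n$.

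The main obstacle I anticipate is the uniform asymptotic bound on $H_m^{(i)}/H_m^{(r)}$ at fixed complex $z$ with the correct $m^{-1/2}$ rate: one must invoke Plancherel--Rotach off the real axis with care as to which WKB exponential mode dominates on each half-plane, or alternatively carry out a uniform-in-$z$ iteration of the Hermite recurrence the requisite number of times. Once that estimate is in hand, the Rouch\'e step is entirely routine.
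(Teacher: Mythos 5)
Your argument is correct, but it is a genuinely different route from the one in the paper. You expand the Wronskian along its last column (this is exactly the paper's \eqref{eq:Pnexpansion}), divide by the dominant factor $H_m^{(r)}$, show the remainder $R_n$ is $O(n^{-1/2})$ uniformly on a fixed non-real disk, and conclude by Rouch\'e. The paper instead proves a structural fact (Proposition \ref{prop:zerores}: $P_n^2 e^{-x^2}/H_\lambda^2$ has vanishing residues, established by induction through the Darboux factorization), extracts from it the electrostatic balance identity \eqref{eq:zerobalance} at the simple zero $z_j$, and then observes that the $\gtrsim c\sqrt{n}$ real zeros of $P_n$ lying in a fixed window around $\Re z_j$ (Corollary \ref{cor:BD} plus Plancherel--Rotach zero counting) force the imaginary part of the left side to be of order $\sqrt{n}$, which can only be balanced by one exceptional zero coming within $O(1/\sqrt{n})$ of $z_j$. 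What your approach buys: it is more local and, via the argument principle, actually counts the zeros of $P_n$ in the shrinking disk as the multiplicity of $z_j$, so it would generalize to multiple zeros of $H_\lambda$ (with a different rate); it also avoids the Darboux/residue machinery entirely. What it costs: the whole weight rests on the uniform off-axis ratio asymptotic $H_k(z)/H_{k-1}(z)\sim \pm i\sqrt{2k}$ on a compact set disjoint from $\mathbb{R}$, which you correctly flag as the main obstacle; it is true and standard (Szeg\H{o}'s fixed-$z$ asymptotic, Theorem 8.22.7, is valid for complex $z$ in bounded regions, and for $\Im z\neq 0$ the leading cosine grows like $e^{|\Im z|\sqrt{2k}}$ and dominates the relative error), but you should either cite it precisely or carry out the recurrence iteration, since the paper's route needs only the much cruder statement that a fixed real interval contains $\gtrsim c\sqrt{n}$ zeros of $H_n$. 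The paper's residue identity also has independent interest beyond this theorem.
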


Theorem \ref{thm:xzeros} proves the statement in Conjecture
\ref{conj:xzeros} on the exceptional zeros in the case that all zeros
of $H_{\lambda}$ are simple.  In that case, we can relabel the
exceptional zeros \eqref{eq:Xzeros} in such a way that for every $j =
1, \ldots, |\lambda|$,
\[ z_{j,n} = z_j + O\left(\frac{1}{\sqrt{n}}\right) \quad \text{as } n \to \infty, \]
and then clearly the sequence $(z_{j,n})_n$ tends to $z_j$ for every $j$.

The condition of simple zeros may not be too restrictive, since it
is  actually believed that all non-real zeros of a generalized Hermite polynomial should be simple.
In fact, Alexander Veselov made the following  conjecture, that is quoted in \cite{FHV}:
\begin{conjecture} \label{conj:simplezeros}
For any partition $\lambda$ the 
zeros of $H_{\lambda}(z)$ are simple, except possibly for the
zero at $z=0$.
\end{conjecture}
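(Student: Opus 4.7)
The plan is to decompose $P_n$ into a dominant term proportional to $H_\lambda$ times a Hermite polynomial of degree comparable to $n$, plus lower-order corrections, and then apply Rouch\'e's theorem on a shrinking disk around $z_j$.

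First, I expand the Wronskian \eqref{eq:Pndef} along its last column. Writing $m := n - |\lambda| + r$ for the degree of the variable Hermite polynomial, Laplace expansion gives
\[
P_n(x) = H_\lambda(x)\, H_m^{(r)}(x) + \sum_{k=0}^{r-1} c_k(x)\, H_m^{(k)}(x),
\]
where the leading coefficient is exactly $\Wr[H_{\lambda_r},\ldots,H_{\lambda_1+r-1}] = H_\lambda$, and the $c_k$ are signed generalized-Wronskian minors involving only the fixed polynomials $H_{\lambda_r},\ldots,H_{\lambda_1+r-1}$. Crucially, the $c_k$ do not depend on $n$. Using $H_m^{(k)} = \frac{2^k m!}{(m-k)!} H_{m-k}$ and dividing by the leading factor,
\[
Q_n(x) := \frac{P_n(x)}{\frac{2^r m!}{(m-r)!}\, H_{m-r}(x)} = H_\lambda(x) + \sum_{k=0}^{r-1} \frac{c_k(x)}{2^{r-k}} \cdot \frac{(m-r)!}{(m-k)!} \cdot \frac{H_{m-k}(x)}{H_{m-r}(x)}.
\]
Since the zeros of $H_{m-r}$ lie on $\mathbb R$, the denominator is bounded away from zero on a fixed complex neighborhood $U$ of $z_j$, uniformly in $m$.

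Next, I estimate the correction sum. From the three-term recurrence $H_{\ell+1}(z) = 2z H_\ell(z) - 2\ell H_{\ell-1}(z)$, a balance argument (ansatz $H_\ell/H_{\ell-1} \sim c\sqrt{2\ell}$ forces $c^2 = -1$) gives $H_\ell(z)/H_{\ell-1}(z) \sim \pm i\sqrt{2\ell}$ uniformly on compact subsets of $\mathbb{C}\setminus\mathbb R$. Iterating, $H_{m-k}(z)/H_{m-r}(z) = O(m^{(r-k)/2})$, while $(m-r)!/(m-k)! = O(m^{-(r-k)})$, so the $k$-th correction is $O(m^{-(r-k)/2})$. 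The dominant term, at $k = r-1$, is $O(m^{-1/2}) = O(n^{-1/2})$, so
\[
Q_n(z) = H_\lambda(z) + O(n^{-1/2})
\]
uniformly for $z \in U$.

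Finally, I apply Rouch\'e on a circle $|z - z_j| = C/\sqrt{n}$. Since $z_j$ is a simple zero of $H_\lambda$, there is $c_1 > 0$ with $|H_\lambda(z)| \geq c_1 |z-z_j|$ on some disk around $z_j$. Let $K$ bound the error term above. Choose $C > K/c_1$; then for $n$ large,
\[
|H_\lambda(z)| \geq c_1 C / \sqrt{n} > K/\sqrt{n} \geq |Q_n(z) - H_\lambda(z)| \quad \text{on } |z - z_j| = C/\sqrt{n},
\]
so $Q_n$ has exactly one zero inside the disk; since $H_{m-r}$ does not vanish there, this zero is also a zero $z_{k,n}$ of $P_n$, and $|z_j - z_{k,n}| \leq C/\sqrt{n}$. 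The main technical obstacle is justifying the Hermite-ratio asymptotic $H_\ell(z)/H_{\ell-1}(z) \sim \pm i\sqrt{2\ell}$ uniformly on compact subsets of $\mathbb C \setminus \mathbb R$, with error $o(\sqrt{\ell})$ sufficient to carry the $O(n^{-1/2})$ bound through the iteration; this can be obtained from classical Plancherel--Rotach asymptotics (taking care that for fixed complex $z$ one is interior to the oscillatory region for large $m$) or by a direct recurrence-based analysis with explicit constants.
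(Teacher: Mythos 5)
Your proposal does not address the statement at hand. The statement is Conjecture \ref{conj:simplezeros} (Veselov's conjecture): for \emph{any} partition $\lambda$, all zeros of the generalized Hermite polynomial $H_{\lambda}$ are simple, except possibly the zero at the origin. The paper offers no proof of this --- it is quoted explicitly as an open problem, and the abstract states that it ``remains open.'' What you have written is instead an attempted proof of Theorem \ref{thm:xzeros}, namely that each \emph{simple} zero $z_j$ of $H_{\lambda}$ attracts a zero $z_{k,n}$ of $P_n$ with $|z_j - z_{k,n}| \leq C/\sqrt{n}$. Your argument takes the simplicity of $z_j$ as a hypothesis (you use it to get the lower bound $|H_{\lambda}(z)| \geq c_1 |z - z_j|$ needed for Rouch\'e), so it cannot possibly establish that the zeros are simple; the logic runs in the wrong direction. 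Nothing in your decomposition of $P_n$ or in the asymptotics of Hermite ratios says anything about the multiplicity structure of $H_{\lambda}$ itself, which is a fixed polynomial independent of $n$.

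As a secondary remark: even read as a proof of Theorem \ref{thm:xzeros}, your route differs from the paper's (which uses the vanishing-residue property of $P_n^2 e^{-x^2}/H_{\lambda}^2$, proved by induction via Darboux factorization, combined with an electrostatic balance of the zeros), and it rests on an unproved uniform asymptotic $H_{\ell}(z)/H_{\ell-1}(z) \sim \pm i\sqrt{2\ell}$ on compact subsets of $\mathbb{C}\setminus\mathbb{R}$ with error control fine enough to survive $r$ iterations; the consistency ansatz from the three-term recurrence is not a proof, and the correction from the $2z$ term enters at exactly the relative order $O(\ell^{-1/2})$ you need to control. But the essential point is that you have proved (modulo that gap) a theorem the paper already proves by other means, while the conjecture you were asked about remains untouched.
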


Our results are confirmed by numerical experiments. Figure \ref{fig:plotzeros} shows the
twelve zeros of $H_{\lambda}$ where $\lambda = (4,4,2,2)$, which are simple and non-real.
The figure also shows the zeros of the exceptional Hermite polynomial $P_{40}$ of degree $40$.
It has $28$ real zeros and $12$ non-real zeros that are close to the zeros of $H_{\lambda}$
as predicted by Theorem \ref{thm:xzeros}.

\begin{figure}
\centering
\includegraphics[scale=0.5]{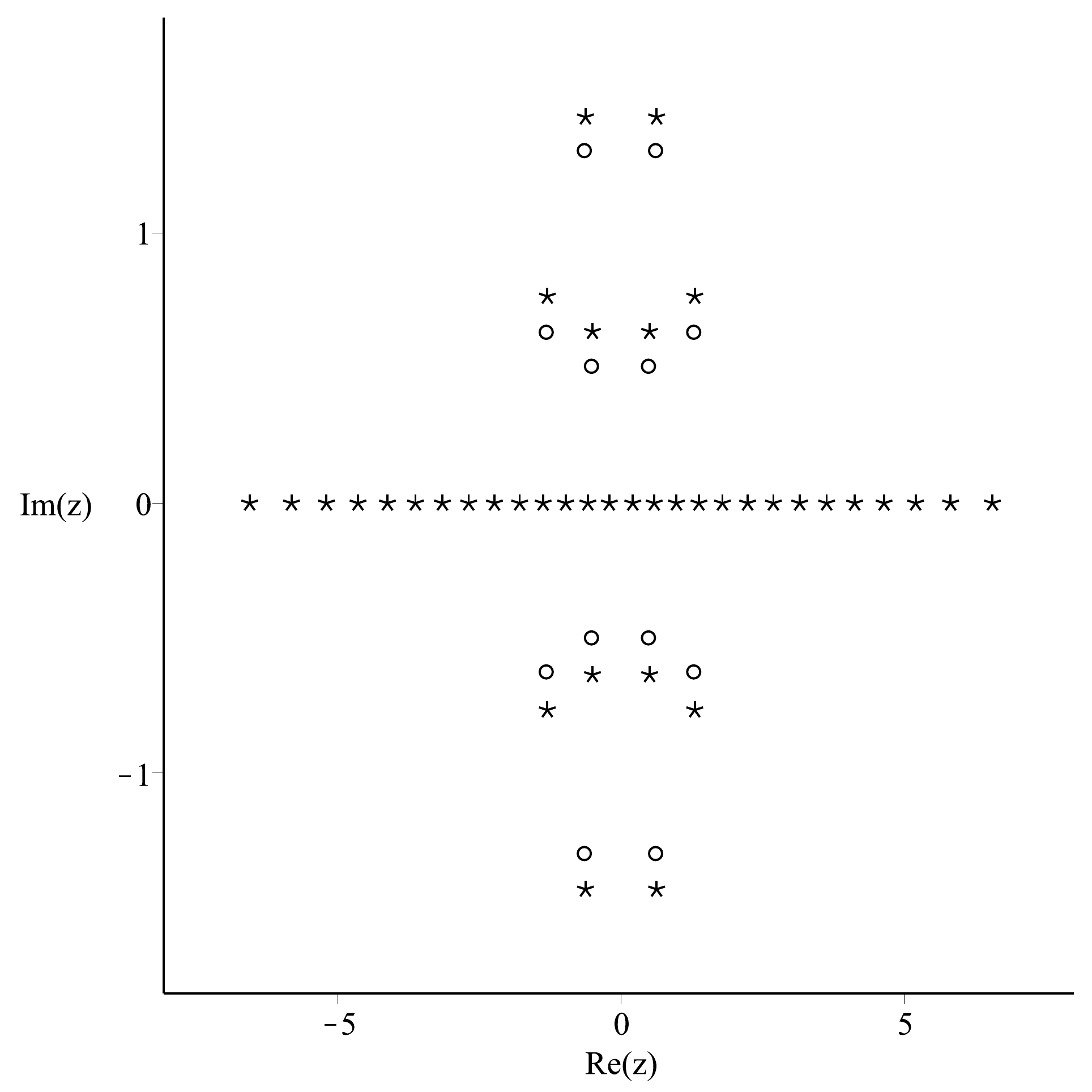}
\caption{Plot of the zeros of $H_{\lambda}$ with $\lambda = (4,4,2,2)$ (open circles)
together with the zeros of the corresponding exceptional Hermite polynomial of
degree $40$ (stars). Each zero of $H_{\lambda}$ attracts exactly one non-real zero of $P_n$ as $n \to \infty$.}
\label{fig:plotzeros}
\end{figure}


\section{Proof of Theorem \ref{thm:zerospacing} and a Mehler-Heine formula} \label{sect:MehlerHeine}
In this section we prove Theorem \ref{thm:zerospacing} by means of a Mehler-Heine
formula for the polynomials $P_n$, which may be of interest in itself.
This is a generalization of the classical Mehler-Heine formula for Hermite polynomials
\begin{equation} \label{eq:HermiteMH}
\begin{aligned} 
  \frac{(-1)^n \sqrt{n\pi}}{2^{2n} n!}\,
  H_{2n}\!\left(\frac{x}{2\sqrt{n}}\right)  &\rightrightarrows 
  \cos x,  \\
  \frac{(-1)^n\sqrt{\pi}}{2^{2n+1} n!}\,
  H_{2n+1}\!\left(\frac{x}{2\sqrt{n}}\right) &\rightrightarrows 
  \sin x,
\end{aligned}
\end{equation}  
where the double arrows denote uniform convergence on compact subsets
of the   complex plane as $n\to \infty$, see formulas 18.11.7 and 18.11.8 in \cite{NIST}.

\begin{proposition} \label{prop:MH}
  Let $\lambda$ be an even partition of length $r=2s$ and $P_n(x)$, 
	$n\in \mathbb N_\lambda$ the corresponding exceptional Hermite
  polynomials as defined in \eqref{eq:Pndef}.  We then have
	\begin{align} \label{eq:evenHM}
		\frac{(-1)^{n-\frac{|\lambda|}{2}} \sqrt{n \pi}}{2^{2n-|\lambda|+2r} (n-\frac{|\lambda|}{2} + \frac{r}{2})!} 
		P_{2n} \left(\frac{x}{2\sqrt{n}}\right) & \rightrightarrows H_{\lambda}(0) \cos x, \\
		\label{eq:oddHM}
		\frac{(-1)^{n-\frac{|\lambda|}{2}} \sqrt{\pi}}{2^{2n-|\lambda|+2r} (n-\frac{|\lambda|}{2} + \frac{r}{2})!}		
		P_{2n+1} \left(\frac{x}{2\sqrt{n}} \right) & \rightrightarrows H_{\lambda}(0) \sin x.
		\end{align}
\end{proposition}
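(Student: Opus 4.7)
I would prove the Mehler--Heine asymptotic by exploiting the Wronskian definition of $P_n$ together with the classical formula \eqref{eq:HermiteMH} applied to the varying Hermite polynomial $H_{n-|\lambda|+r}$ in the last column. Since $\lambda$ is an even partition, $r = 2s$ is even and $N := 2n - |\lambda| + r = 2M$ with $M = n-|\lambda|/2+r/2$, so the cosine version of \eqref{eq:HermiteMH} applies directly in the even case \eqref{eq:evenHM}, and its sine counterpart handles the odd case \eqref{eq:oddHM} in parallel.

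The first step is to write $P_{2n}(x/(2\sqrt{n}))$ as the $(r+1)\times(r+1)$ Wronskian determinant with columns $(H_{k_j}^{(i)}(\xi))_{i=0}^r$ for the fixed polynomials $H_{k_1}, \ldots, H_{k_r}$ and last column $(H_{2M}^{(i)}(\xi))_{i=0}^r$, where $\xi = x/(2\sqrt{n})$. As $n\to\infty$, $\xi\to 0$ and each entry in the first $r$ columns converges to the constant $H_{k_j}^{(i)}(0)$. For the last column, define $F_M(u) := \frac{(-1)^M\sqrt{M\pi}}{2^{2M}M!}H_{2M}(u/(2\sqrt{M}))$; by \eqref{eq:HermiteMH}, $F_M \to \cos$ uniformly on compact subsets of $\mathbb{C}$, and since the $F_M$ are entire, Weierstrass' theorem yields $F_M^{(i)}\to\cos^{(i)}$ on compacts. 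Setting $u = x\sqrt{M/n}$ (so $u\to x$ and $u/(2\sqrt{M}) = x/(2\sqrt{n})$), this gives for each fixed $i$ the uniform asymptotic
\[
H_{2M}^{(i)}\!\left(\frac{x}{2\sqrt{n}}\right) \;\sim\; \frac{(-1)^M 2^{2M}M!}{\sqrt{M\pi}}\,(2\sqrt{M})^{i}\,\cos^{(i)}(x).
\]

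The second step is the cofactor expansion of the Wronskian along the last column, writing $P_{2n}(\xi) = \sum_{i=0}^{r}(-1)^{i+r}\,H_{2M}^{(i)}(\xi)\,\mathcal{M}_i(\xi)$, where $\mathcal{M}_i$ is the $r\times r$ minor obtained by deleting row $i$ and the last column. The key identity is $\mathcal{M}_r(\xi) = \Wr[H_{k_1},\ldots,H_{k_r}](\xi) = H_\lambda(\xi)$, which tends to $H_\lambda(0) \neq 0$ since $\lambda$ is even. Substituting the asymptotics of the last column, dividing by the normalizing factor in the statement, and consolidating signs via $(-1)^{M+s} = (-1)^{n-|\lambda|/2}$ together with $\cos^{(r)}(x) = (-1)^s\cos x$ yields \eqref{eq:evenHM}. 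The odd case \eqref{eq:oddHM} follows by the identical scheme with the $\sin$ version of \eqref{eq:HermiteMH} replacing $\cos$ throughout. The main obstacle is the bookkeeping in the cofactor expansion: the $i$-th summand has magnitude of order $(2\sqrt{M})^i$, so the $r+1$ terms live on different asymptotic scales in $M$, and one must verify that after pairing them with the minors $\mathcal{M}_i(0)$ and folding the sign factors into the Mehler--Heine normalization, the combination consolidates into the compact form $H_\lambda(0)\cos x$ stated in the proposition.
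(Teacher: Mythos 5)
Your proposal is essentially the paper's own proof: expand the Wronskian \eqref{eq:Pndef} along its last (varying) column, note that the minor multiplying the highest derivative is exactly $H_\lambda$, apply the differentiated Mehler--Heine limits \eqref{eq:HermiteMH} to $H_{2M}^{(i)}\bigl(\tfrac{x}{2\sqrt n}\bigr)$ with $M=n-\tfrac{|\lambda|}{2}+\tfrac{r}{2}$, and observe that the $i=r$ term dominates the others by factors $(2\sqrt n)^{\,r-i}$ while the minors tend to constants, the odd case being identical with $\sin$. The bookkeeping you defer is the same step the paper compresses into ``after some simplifications''; if you carry it out you find the clean statement $\tfrac{a_{2M}}{(2\sqrt n)^{r}}\,P_{2n}\bigl(\tfrac{x}{2\sqrt n}\bigr)\rightrightarrows(-1)^{r/2}H_\lambda(0)\cos x$ (with $a_{2M}$ as in \eqref{eq:an}), whose prefactor differs from the one printed in \eqref{eq:evenHM} by a factor $n^{r/2}$ (and from \eqref{eq:oddHM} by $2n^{r/2}$) --- an apparent typographical slip in the stated constants that is immaterial for the zero asymptotics, so your approach is sound and matches the paper's.
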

\begin{proof}
  Expand the Wronskian determinant in \eqref{eq:Pndef} along the last
  column as
  \begin{align} \label{eq:Pnexpansion}
    P_{n} = H_{\lambda} H_{n-|\lambda|+r}^{(r)} + \sum_{j=0}^{r-1}  Q_j H_{n-|\lambda|+r}^{(j)}
		\end{align}
where $Q_0 = \Wr[H'_{\lambda_r},\ldots, H'_{\lambda_1+r-1}]$ and in general, each coefficient
$Q_j$ is a differential polynomial in $H_{\lambda_r}, \ldots, H_{\lambda_1+r-1}$
of degree
  \begin{equation} \label{eq:degQj} 
	\deg   Q_j = |\lambda|+j-r
	\end{equation}
that is independent of $n$.

Write 
\begin{equation} \label{eq:an}
	a_{2n}  = \frac{(-1)^n \sqrt{n \pi}}{2^{2n} n!}, \qquad
  a_{2n+1} = \frac{(-1)^n \sqrt{\pi}}{2^{2n+1} n!}.
	\end{equation}
Then, since \eqref{eq:HermiteMH} can be differentiated any number of times
by properties of uniform convergence of entire functions, we have for every non-negative integer $j$,
\begin{align} \label{eq:HermiteMHdiff}
  \frac{a_{2n}}{(2\sqrt{n})^j}  H_{2n}^{(j)} \left(\frac{x}{2\sqrt{n}}\right)  &\rightrightarrows 
  \left(\frac{d}{dx}\right)^j \cos x  \\
  \frac{a_{2n+1}}{(2\sqrt{n})^j}
  H_{2n+1}^{(j)} \left(\frac{x}{2\sqrt{n}}\right) &\rightrightarrows 
  \left(\frac{d}{dx} \right)^j \sin x
\end{align}
as $n \to \infty$, uniformly on compact subsets of the complex plane.
Thus, since $|\lambda|$ and $r$ are even,
\[ \frac{a_{2n-|\lambda|+r}}{(2 \sqrt{n})^r} H_{2n-|\lambda| + r}^{(r)} \left( \frac{x}{2\sqrt{n}} \right) \rightrightarrows 
		(-1)^{r/2} \cos x \]
while, for $j=0, \ldots, r-1$,
\[ \frac{a_{2n-|\lambda|+r}}{(2 \sqrt{n})^r} H_{2n-|\lambda| + r}^{(j)} \left( \frac{x}{2\sqrt{n}} \right) \rightrightarrows 
		0 \]
From this we see that the first term in \eqref{eq:Pnexpansion} is the dominant term as $n \to \infty$.
Using the expression \eqref{eq:an} for $a_{2n-|\lambda|+r}$ we  get \eqref{eq:evenHM} after some simplifications.

Relation \eqref{eq:oddHM}  is proved in an analogous fashion.
\end{proof}

Theorem \ref{thm:zerospacing} is an almost immediate consequence of Proposition \ref{prop:MH}.

\begin{proof}[Proof of Theorem \ref{thm:zerospacing}]
Recall that for $n \geq |\lambda|+\lambda_1$, the polynomial $P_n$ has exactly $n-|\lambda|$
simple real zeros, and $|\lambda|$ non-real zeros.

It follows from \eqref{eq:evenHM} and Hurwitz's theorem from complex analysis,
that those zeros of $P_{2n} \left(\frac{x}{2\sqrt{n}} \right)$ that do not tend to infinity,
tend to the zeros of $\cos x$ as $n \to \infty$, with each zero of $\cos x$ attracting exactly
one zero of $P_{2n} \left(\frac{x}{2\sqrt{n}} \right)$.
Thus the non-real zeros of $P_{2n}\left(\frac{x}{2\sqrt{n}}\right)$ tend to infinity, and the real zeros
tend to the zeros of $\cos x$, which gives \eqref{eq:evenzeros}.

The limit \eqref{eq:oddzeros} follows from \eqref{eq:oddHM} in a similar way.
\end{proof}


\section{Proof of Theorem \ref{thm:semicircle}} \label{sect:reg-zeros}

We start with a lemma.
\begin{lemma} 
\label{lem:lincombo}
Let $n \in \mathbb N_{\lambda}$. Then 
$P_n$ is a linear combination of $H_n, \ldots, H_{n-s}$ where $s = |\lambda|+r$.
\end{lemma}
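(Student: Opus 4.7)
The plan is to start from the cofactor expansion \eqref{eq:Pnexpansion}, i.e.\
\[
P_n(x) \;=\; H_\lambda(x)\,H_{n-|\lambda|+r}^{(r)}(x) \;+\; \sum_{j=0}^{r-1} Q_j(x)\,H_{n-|\lambda|+r}^{(j)}(x),
\]
and to eliminate the derivatives of the last factor using the classical identity $H_m^{(j)}(x) = \tfrac{2^j m!}{(m-j)!}\,H_{m-j}(x)$. This converts each term into a product of a polynomial of bounded degree---namely $H_\lambda$ of degree $|\lambda|$, or $Q_j$ of degree $|\lambda|+j-r$ (both independent of $n$ by \eqref{eq:degQj})---with a single classical Hermite polynomial $H_{n-|\lambda|+r-j}$, for $0\le j\le r$. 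Every term of the resulting finite sum thus has the form (polynomial of degree $\le|\lambda|$) $\times H_m$ with $m\in[n-|\lambda|,\,n-|\lambda|+r]$.

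The next step is to invoke the standard Hermite linearization rule: multiplication by any polynomial $p$ of degree $d$ sends $H_m$ into the span of $H_{m-d}, H_{m-d+1}, \ldots, H_{m+d}$, by iteration of the three-term recurrence $x H_m = \tfrac{1}{2}H_{m+1} + m H_{m-1}$. Applied termwise to the expansion above, this confines $P_n$ to the span of Hermite polynomials $H_k$ over a range of indices whose upper endpoint is $n$ (matching $\deg P_n = n$); combining the contributions from all terms then yields $P_n \in \mathrm{span}(H_n,H_{n-1},\ldots,H_{n-s})$ for a constant $s = s(\lambda)$ independent of $n$.

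The main obstacle will be the bookkeeping required to extract the specific value $s = |\lambda|+r$. A direct termwise application of the linearization gives only the cruder estimate $s \le 2|\lambda|$, since the leading contribution $H_\lambda(x)\,H_{n-|\lambda|}(x)$ alone already expands down to $H_{n-2|\lambda|}$. Reaching the sharper bound $|\lambda|+r$ will therefore require either exploiting cancellations between the $r+1$ terms of \eqref{eq:Pnexpansion}, or rewriting $P_n$ as a single determinant in the rescaled Hermite polynomials $H_k(x)/k!$---these being precisely the coefficients of the generating series $e^{2xt-t^2}$---from which the index spread of the expansion can be read off via the sum $\sum_{i=0}^r k_i = n + \binom{r+1}{2}$ of Hermite indices appearing in the defining Wronskian.
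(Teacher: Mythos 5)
Your proposal stops short of the stated bound and says so: the termwise reduction gives only $P_n\in\mathrm{span}(H_{n-2|\lambda|},\ldots,H_n)$, and the two routes you suggest for sharpening $2|\lambda|$ to $|\lambda|+r$ (cancellations among the $r+1$ terms, or reading the spread off a determinant identity) are not carried out. So as a proof of the lemma as stated it is incomplete. But the real point is that those routes cannot succeed, because the constant $s=|\lambda|+r$ is false whenever $\lambda$ has a part $\geq 2$ (i.e.\ $|\lambda|>r$). In your own reduction the $j=r$ term of \eqref{eq:Pnexpansion} is a nonzero multiple of $H_\lambda(x)H_{n-|\lambda|}(x)$, whose Hermite coefficient of $H_{n-2|\lambda|}$ is nonzero (pair against $H_{n-2|\lambda|}$; only the leading term of $H_\lambda$ contributes), while the term with $Q_j$, $j<r$, involves only indices $\geq n-2|\lambda|+2(r-j)$. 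Hence the $H_{n-2|\lambda|}$-coefficient of $P_n$ is nonzero and no cancellation is available, so $2|\lambda|$ is sharp. Concretely, for $\lambda=(2,2)$ one finds $P_8=\Wr[H_2,H_3,H_6]=(192x^2+96)H_6-1536\,x^3H_5+120\,(32x^4+24)H_4$, whence $\int_{-\infty}^{\infty}P_8(x)\,e^{-x^2}\,dx=3840\int_{-\infty}^{\infty}x^4H_4(x)e^{-x^2}\,dx=92160\sqrt{\pi}\neq 0$, so $P_8$ is not orthogonal to $H_0$ and cannot lie in the span of $H_2,\ldots,H_8$ as $s=|\lambda|+r=6$ would require.

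The discrepancy with the paper is an indexing slip there, not a missing idea on your side: in passing from \eqref{eq:Pnexpansion} to \eqref{eq:Pnexpansion2} the Hermite index should be $n-|\lambda|+r-j$ (as you have it), not $n-j$, while $\deg\tilde Q_j=|\lambda|+j-r$ by \eqref{eq:degQj}; redoing the paper's orthogonality step with the correct indices, the binding case is $j=r$ and one gets $\int x^kP_n(x)e^{-x^2}dx=0$ only for $k\leq n-2|\lambda|-1$, i.e.\ exactly your $s=2|\lambda|$. Note also that the paper's mechanism is preferable to your linearization step and gives the same constant with less bookkeeping: each term is a polynomial of degree $d_j$ times $H_{m_j}$ with $d_j+m_j=n$, so $P_n$ is orthogonal to $x^k$ for all $k<n-2\max_j d_j=n-2|\lambda|$, and orthogonality to low powers immediately places $P_n$ in the span of $H_{n-2|\lambda|},\ldots,H_n$. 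Finally, the weaker constant is harmless downstream: Corollary \ref{cor:BD} and the proofs of Theorems \ref{thm:semicircle} and \ref{thm:xzeros} only require some $s$ independent of $n$, so you should simply prove and use the lemma with $s=2|\lambda|$ rather than chase cancellations that do not exist.
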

\begin{proof}
From the expansion \eqref{eq:Pnexpansion} and the fact that $H_n^{(j)}$ is a multiple of 
$H_{n-j}$ we find
\begin{equation} \label{eq:Pnexpansion2} 
	P_n(x) = \sum_{j=0}^{r} \tilde{Q}_j(x) H_{n-j}(x),   
	\end{equation}
where $\tilde{Q}_j$ is a multiple of $Q_j$. Thus $\deg \tilde{Q}_j = |\lambda| + j-r$,
see \eqref{eq:degQj}.
If $k < n - |\lambda|-r$ then $\deg (x^k \tilde{Q}_j(x)) < n + j-2r \leq n-j$ for $j = 0,1, \ldots, r$.
By the orthogonality of the Hermite polynomials we then have
\[ \int_{-\infty}^{\infty} x^k \tilde{Q}_j(x)  H_{n-j}(x) e^{-x^2} dx = 0, \qquad j =0,1, \ldots, r \]
and then also by linearity and \eqref{eq:Pnexpansion2}
\[ \int_{-\infty}^{\infty} x^k P_n(x) e^{-x^2} dx  =0, \qquad k = 0,1, \ldots, n-|\lambda|-r-1. \]
This implies that $P_n$ is a linear combination of $H_n, \ldots, H_{n-|\lambda|-r}$
as claimed in the lemma.
\end{proof}

Let us recall the following known result \cite[Theorem 3.1]{BD}.
\begin{theorem}[Beardon, Driver]
\label{thm:BD}
Let $\{ \pi_n \}_{n=0}^\infty$ be orthogonal polynomials associated
with a positive measure.  Fix $0<s<n$ and let $c_{1,n}<\cdots
<c_{n,n}$ be the zeros of $\pi_n$, listed in increasing order.  Let
$P$ be a polynomial in the span of $\pi_s, \ldots, \pi_n$.  Then, at
least $s$ of the intervals $(c_k,c_{k+1})$, $1\leq k<n$ contain a zero
of $P$.
\end{theorem}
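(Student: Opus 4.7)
The plan is to reduce the claim to a classical discrete oscillation lemma via Gaussian quadrature, tacitly excluding the trivial case $P \in \mathbb{R}\,\pi_n$ in which every $P(c_k)$ vanishes and the conclusion is vacuous. Expanding $P = \sum_{j=s}^{n} a_j \pi_j$, the orthogonality of $\{\pi_j\}$ against the underlying measure $d\mu$ immediately gives $\int P\,q\,d\mu = 0$ for every polynomial $q$ with $\deg q < s$. Gaussian quadrature at the nodes $c_1,\ldots,c_n$ is exact for all polynomials of degree less than $2n$, so, writing $\lambda_k > 0$ for the Gauss weights and noting $\deg(Pq) \le n + s - 1 < 2n$, one obtains the discrete identity
\[ \sum_{k=1}^{n} \lambda_k\,P(c_k)\,q(c_k) \;=\; 0 \qquad \text{for every } q \text{ with } \deg q < s. \]

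The core step is then a standard sign-change lemma: any nonzero vector $v \in \mathbb{R}^n$ satisfying $\sum_k v_k q(c_k) = 0$ for all $q$ with $\deg q < s$ must exhibit at least $s$ strict sign changes along its entries. The short proof is by contradiction: if only $t < s$ sign changes occur, at indices $k_1 < \cdots < k_t$, choose $y_j \in (c_{k_j},c_{k_j+1})$ and set $q(x) = \pm\prod_{j=1}^{t}(x - y_j)$; picking the global sign so that $v_k q(c_k) \ge 0$ for every $k$, with strict positivity somewhere since $v \ne 0$, contradicts the orthogonality. Applied to $v_k = \lambda_k P(c_k)$, whose entries inherit the sign pattern of $P(c_1),\ldots,P(c_n)$, this produces at least $s$ strict sign changes in $P(c_1),\ldots,P(c_n)$, and the intermediate value theorem then places a zero of $P$ inside each corresponding open interval $(c_k,c_{k+1})$.

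The one subtlety I anticipate is the degenerate case $P(c_k) = 0$ for some $k$, where the zero sits at an interval endpoint rather than in its interior and the ``sign change'' bookkeeping becomes ambiguous. I would handle this by the perturbation $P_\varepsilon = P + \varepsilon\,\pi_m$ for some fixed $m$ with $s \le m < n$: the perturbed polynomial still lies in the span, its nodal values are nonzero for all sufficiently small $\varepsilon$ outside a finite exceptional set, and the clean argument applied to $P_\varepsilon$ yields $s$ open intervals containing zeros of $P_\varepsilon$. Hurwitz's theorem then lets one pass to $\varepsilon \to 0$, the only remaining care being to verify by a multiplicity count that zeros converging to endpoints do not reduce the number of occupied open intervals below $s$ in the limit.
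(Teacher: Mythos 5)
The paper does not actually prove this statement; it quotes it from Beardon and Driver \cite{BD}, so there is no internal proof to compare against. Your main line of argument --- Gauss quadrature at the zeros of $\pi_n$ combined with the sign-change lemma for vectors annihilated by all polynomials of degree less than $s$ --- is the standard route and is correct as far as it goes: it shows that the reduced sign sequence of $P(c_1),\dots,P(c_n)$ (zero entries deleted) has at least $s$ sign changes, and when $P(c_k)\neq 0$ for every $k$ this does place a zero of $P$ in $s$ distinct open intervals $(c_k,c_{k+1})$.

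The degenerate case, however, is not a removable technicality: with \emph{open} intervals the statement is actually false when $P$ vanishes at a node, so the perturbation-plus-Hurwitz repair you sketch cannot be completed. Take the Legendre weight on $[-1,1]$, $n=3$, $s=1$, and $P=\pi_3+\pi_1=x\left(x^2+\tfrac{2}{5}\right)$. The zeros of $\pi_3=x^3-\tfrac{3}{5}x$ are $c_1=-\sqrt{3/5}<c_2=0<c_3=\sqrt{3/5}$, and the only real zero of $P$ is $x=0=c_2$, so neither $(c_1,c_2)$ nor $(c_2,c_3)$ contains a zero of $P$. Your argument breaks exactly at the step you defer: the zero of $P_\varepsilon$ guaranteed in $(c_2,c_3)$ drifts to the endpoint $c_2$ as $\varepsilon\to 0$ (in general, zeros from adjacent intervals can collapse onto a shared node), and no multiplicity count can recover a zero in the open interval, because there is none. (A smaller point: for the perturbation to clear all nodal zeros you must take $m=n-1$, since $\pi_{n-1}(c_k)\neq 0$ by interlacing, whereas $\pi_m(c_k)$ can vanish simultaneously with $P(c_k)$ for other $m$.) The statement should be read with closed intervals $[c_k,c_{k+1}]$, and then your sign-change lemma finishes the proof with no perturbation and no excluded cases: if $(k,k')$ is a sign change of the reduced sequence, then either $k'=k+1$ and the intermediate value theorem puts a zero of $P$ in the open interval, or $k'>k+1$ and each closed interval $[c_j,c_{j+1}]$ with $k\le j<k'$ contains one of the nodal zeros $c_{k+1},\dots,c_{k'-1}$; since the index sets $\{k,\dots,k'-1\}$ attached to distinct sign changes are pairwise disjoint, at least $s$ closed intervals are accounted for (and the case $P=c\,\pi_n$ is trivial, every closed interval containing two nodes). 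For the way the result is used in Section 4 of the paper the open/closed distinction is immaterial.
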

\begin{corollary}
  \label{cor:BD}
  Let $\lambda$ be an even partition of length $r$, and set
  $s=|\lambda|+r$. For $n>s$, let $c_{1,n}<\cdots <c_{n,n}$ be the
  zeros of 
  the classical Hermite polynomial $H_n$, listed in increasing
  order. Then, at least $n-s$ intervals $(c_{k}, c_{k+1})$ contain a
  zero of the exceptional Hermite polynomial $P_n$.
\end{corollary}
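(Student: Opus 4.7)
The plan is to chain together the two results already stated: Lemma \ref{lem:lincombo} tells us exactly which span of classical Hermite polynomials contains $P_n$, and Theorem \ref{thm:BD} then converts such a span statement into a zero-interlacing statement. So the proof should essentially be a single application of Beardon--Driver with a careful bookkeeping of indices.

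First I would invoke Lemma \ref{lem:lincombo} to write
\[ P_n \in \operatorname{span}\{H_{n-s}, H_{n-s+1}, \ldots, H_n\}, \qquad s = |\lambda| + r. \]
Then I would apply Theorem \ref{thm:BD} to the sequence of classical Hermite polynomials $\pi_k = H_k$ (which are orthogonal with respect to the positive measure $e^{-x^2}dx$ on $\mathbb{R}$), taking the parameter called ``$s$'' in the statement of Theorem \ref{thm:BD} to be $n-s$ in our notation, and with $P = P_n$. The hypothesis $0 < n-s < n$ of Theorem \ref{thm:BD} is guaranteed by the standing assumption $n > s$ of the corollary. The conclusion of Theorem \ref{thm:BD} is then that at least $n-s$ of the $n-1$ open intervals $(c_{k,n}, c_{k+1,n})$ between consecutive zeros of $H_n$ contain a zero of $P_n$, which is exactly what we want.

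The only point requiring any care is the potential confusion in notation: the symbol $s$ is overloaded, appearing both as the lower index in Beardon--Driver's statement and as the integer $|\lambda| + r$ in our corollary. I would be explicit that we are taking Beardon--Driver's ``$s$'' to equal our $n-s$. Beyond this bookkeeping, there is no real obstacle; the corollary is a transparent consequence of the two ingredients.
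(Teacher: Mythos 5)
Your proposal is correct and is exactly the argument the paper intends: the corollary is stated without a separate proof precisely because it follows by combining Lemma \ref{lem:lincombo} (which places $P_n$ in the span of $H_{n-s},\ldots,H_n$) with Theorem \ref{thm:BD} applied with the Beardon--Driver parameter equal to $n-s$. Your note about the overloaded symbol $s$ is the only bookkeeping point, and you handle it correctly.
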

\noindent Thus as $n \to \infty$, at least $n-s$ of the zeros of $P_n$
follow the zeros of the Hermite polynomial.  

We are now able to give the proof of Theorem \ref{thm:semicircle}.

\begin{proof}[Proof of Theorem \ref{thm:semicircle}]
Let $f$ be a bounded continuous function  on $\mathbb R$.

The normalized counting
measure of the zeros of Hermite polynomials, scaled by the factor $\sqrt{2n}$,
converges weakly to $\frac{2}{\pi} \sqrt{1-x^2} dx$, see \cite{deift}. Thus
\[
\lim_{n\to\infty} \frac{1}{n}\sum_{k=1}^n
f\left(\frac{c_{k,n}}{\sqrt{2n}}\right) = \frac{2}{\pi} \int_{-1}^1 f(x) \sqrt{1-x^2} dx. \]
Then also, if  $\xi_{k,n} \in (c_{k,n}, c_{k+1,n})$ for every $k =1, \ldots, n-1$,
\begin{equation} \label{eq:sclimit2}
\lim_{n\to\infty} \frac{1}{n}\sum_{k=1}^{n-1}
f\left(\frac{\xi_{k,n}}{\sqrt{2n}}\right) = \frac{2}{\pi} \int_{-1}^1 f(x) \sqrt{1-x^2} dx.
\end{equation}

Let $n>s=|\lambda|+r$. By Corollary \ref{cor:BD}, we can take $\xi_{k,n} \in (c_{k,n}, c_{k+1,n})$
to be a zero of $P_n$ for every but at most $s$ values of $k$. If we drop these at most $s$ indices $k$
from the sum in \eqref{eq:sclimit2}, then it will not affect the limit, since $f$ is bounded.
Then we have a sum over at least $n-s$ real zeros  of $P_n$. Extending the sum by including the remaining
real zeros (if any) of $P_n$ does not affect the limit either, since their number is bounded by $s-|\lambda|$.
Thus we obtain the limit \eqref{eq:sclimit1}. 
\end{proof}

\section{Proof of Theorem \ref{thm:xzeros}}
\label{sect:exc-zeros}

The following residue
property is essential for the proof that follows.

\begin{proposition} 
  \label{prop:zerores}
  Let $\lambda$ be a partition and let $P_n$ be the polynomial 
	as defined in \eqref{eq:Pndef}. Then, the meromorphic
  function
  \begin{equation} \label{eq:Pn2} 
		x \mapsto  \frac{P_n(x)^2}{H_{\lambda}(x)^2}  e^{-x^2}, \qquad x\in    \mathbb C,
		\end{equation}
	has vanishing residues at each of its poles (which are the zeros of $H_{\lambda}$).
\end{proposition}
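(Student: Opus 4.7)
My plan is to compute the residue of the meromorphic function in \eqref{eq:Pn2} at a zero $z_j$ of $H_\lambda$ directly, using the differential equation \eqref{eq:PnDE} to show that it vanishes. Write $H_\lambda(x) = (x-z_j)^m h(x)$ with $h(z_j)\neq 0$, where $m$ is the multiplicity of $z_j$. Then
\[
\frac{P_n(x)^2 e^{-x^2}}{H_\lambda(x)^2} = \frac{g(x)}{(x-z_j)^{2m}}, \qquad g(x) := \frac{P_n(x)^2 e^{-x^2}}{h(x)^2},
\]
where $g$ is analytic near $z_j$. Hence the residue at $z_j$ equals $g^{(2m-1)}(z_j)/(2m-1)!$, and the task reduces to showing this particular Taylor coefficient of $g$ vanishes.

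For the case of a simple zero ($m=1$), which is the case of interest for Theorem \ref{thm:xzeros}, the argument collapses to a one-line computation. A direct logarithmic differentiation gives
\[
g'(z_j) = \frac{2 P_n(z_j) e^{-z_j^2}}{h(z_j)^2} \Bigl[P_n'(z_j) - \bigl(z_j + \tfrac{h'(z_j)}{h(z_j)}\bigr) P_n(z_j)\Bigr],
\]
and the bracketed expression is precisely what one obtains by multiplying \eqref{eq:PnDE} through by $H_\lambda$ and evaluating at $x=z_j$: using $H_\lambda(z_j)=0$, $H_\lambda'(z_j) = h(z_j)$, and $H_\lambda''(z_j) = 2h'(z_j)$, most terms drop out and the survivors reduce to $-2h(z_j)P_n'(z_j) + (2h'(z_j) + 2z_j h(z_j)) P_n(z_j) = 0$, which is exactly the vanishing of the bracket.

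The plan for zeros $z_j$ of higher multiplicity $m>1$ is to generalize this argument by repeatedly differentiating the $H_\lambda$-cleared form of \eqref{eq:PnDE} and evaluating successive derivatives at $z_j$; this produces enough linear relations among $P_n(z_j), P_n'(z_j), P_n''(z_j), \ldots$ to force $g^{(2m-1)}(z_j) = 0$. An equivalent route is to analyze the Laurent expansion of the eigenfunction $\varphi_n = P_n e^{-x^2/2}/H_\lambda$ of the Schr\"odinger form \eqref{eq:Doplambda}: the potential has a $2m/(x-z_j)^2$ singularity at $z_j$, and matching powers of $(x-z_j)$ in the ODE forces specific vanishing conditions on the low-order Laurent coefficients of $\varphi_n$ that translate directly into zero residue for $\varphi_n^2$.

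The main obstacle is the combinatorial bookkeeping in this higher-multiplicity case, which requires knowing that $P_n$ itself vanishes at $z_j$ to a prescribed order compatible with the indicial exponents $\tfrac{1\pm\sqrt{1+8m}}{2}$ of the regular singular point at $z_j$. For the downstream application to Theorem \ref{thm:xzeros}, however, only the simple-zero case is needed, so the short computation above already covers what is later used.
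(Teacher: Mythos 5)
Your computation at a simple zero is correct and complete: multiplying \eqref{eq:PnDE} through by $H_\lambda$ gives a polynomial identity, and evaluating it at $z_j$ with $H_\lambda(z_j)=0$, $H_\lambda'(z_j)=h(z_j)$, $H_\lambda''(z_j)=2h'(z_j)$ yields $P_n'(z_j)=\bigl(z_j+h'(z_j)/h(z_j)\bigr)P_n(z_j)$, which is exactly the vanishing of your bracket and hence of the residue $g'(z_j)$. This is a genuinely different route from the paper's. There the proposition is proved for \emph{all} multiplicities at once by induction on the length of $\lambda$: one drops the first part to get $\mu$, factorizes $L_\mu=A^\dagger A+(2\lambda_1+1)$ via a Darboux transformation, and uses the identity $\varphi_{n,\lambda}^2-c\,\varphi_{m,\mu}^2=-\frac{d}{dx}\bigl(\varphi_{n,\lambda}\varphi_{m,\mu}\bigr)$, so that the residues of $\varphi_{n,\lambda}^2$ vanish because the right-hand side is an exact derivative and the residues of $\varphi_{m,\mu}^2$ vanish by the induction hypothesis. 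Your approach buys a short, self-contained verification from the single ODE; the paper's buys the full statement.

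The gap is the case $m>1$, and it is more than ``combinatorial bookkeeping.'' The proposition is stated for an arbitrary partition, and multiple zeros genuinely occur: for instance $H_{(2,1)}=\Wr[H_1,H_3]=32x^3$ has a triple zero at the origin, and since Conjecture \ref{conj:simplezeros} is open one cannot exclude multiple zeros elsewhere. Repeatedly differentiating the $H_\lambda$-cleared form of \eqref{eq:PnDE} at $z_j$ produces the indicial equation (whence the exponents $\tfrac{1\pm\sqrt{1+8m}}{2}$ you quote) and a \emph{linear} recursion among the Taylor coefficients of $P_n$ at $z_j$; but $g^{(2m-1)}(z_j)=0$ is a \emph{quadratic} condition on those coefficients, and a single second-order equation with a regular singular point does not force it for a generic potential of the form $x^2-2(\log H_\lambda)''$ with $H_\lambda$ an arbitrary polynomial vanishing to order $m$. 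What is needed is the trivial-monodromy structure of the potential at $z_j$ (already the fact that $2m=\nu(\nu+1)$ must be twice a triangular number is of this nature), and that structure comes precisely from the Wronskian/Darboux-chain construction that the paper's induction exploits. So your argument proves the proposition only at simple zeros. You are right, however, that the proof of Theorem \ref{thm:xzeros} invokes the residue only at the simple zero $z_j$ under consideration, so for that downstream application your computation suffices.
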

\noindent
Note that for $n \neq m$ the residues of $x \mapsto P_n(x) P_m(x) \frac{e^{-x^2}}{H_{\lambda}(x)^2}$
are zero as well, since by \eqref{eq:PnPm} this function is the derivative
of a meromorphic function.

\begin{proof}
We consider the partition $\mu = (\lambda_2 \geq \cdots \geq \lambda_r)$
that is obtained from $\lambda$ by dropping the first component.
We denote by $\{P_{m,\mu}\}_{m \in \mathbb N_{\mu}}$ the sequence of polynomials
associated with $\mu$.

From the definitions \eqref{eq:Hlambda} and \eqref{eq:Pndef} it is clear that $H_{\lambda} = P_{|\lambda|,\mu}$,
and thus, as already noted in the paragraph containing \eqref{eq:Doplambda},
\[ \psi(x) := \frac{H_{\lambda}(x)}{H_{\mu}(x)} e^{-\frac{1}{2} x^2}  \]
is an eigenfunction of the differential operator
\[ L_{\mu} =  - \frac{d^2}{dx^2}  + \left(x^2 - 2\log H_{\mu}(x) \right), \]
with eigenvalue $2 \lambda_1 + 1$.

Then by the properties of Darboux transformation, we have the factorization
\begin{equation} \label{eq:Darbouxmu} 
	L_{\mu} = A^{\dagger} A + (2 \lambda_1 + 1)
	\end{equation}
with
\begin{equation} \label{eq:AandAdag}
		A =  - \frac{d}{dx} +  \frac{\psi'(x)}{\psi(x)}, \quad \text{ and }  \quad
		A^{\dagger} = \frac{d}{dx} + \frac{\psi'(x)}{\psi(x)},
		\end{equation}
and 
\begin{align} 
	\label{eq:Darbouxlambda} 
	L_{\lambda} := - \frac{d^2}{dy^2}  + \left(x^2 - 2 \log H_{\lambda}(x) \right) 
  = A A^{\dagger} + (2\lambda_1-1). 
	\end{align}  
In addition, if $\varphi \neq \psi$ is any other eigenfunction of $L_{\mu}$,
then $A \varphi$ is an eigenfunction of $L_{\lambda}$, and every
eigenfunction of $L_{\lambda}$ is obtained this way.
Thus, associated with the polynomial $P_{n} = P_{n,\lambda}$ there is
an index $m$ such that
\begin{equation} \label{eq:Darbouxphi} 
	\varphi_{n, \lambda} = A \varphi_{m,\mu}, 
	\end{equation}
where
\begin{align*}
	\varphi_{n,\lambda}(x) = \frac{P_{n, \lambda}(x)}{H_{\lambda}(x)}  e^{-\frac{1}{2}x^2}, \qquad
	\varphi_{m,\mu}(x) = \frac{P_{m, \mu}(x)}{H_{\mu}(x)} e^{-\frac{1}{2} x^2}.
\end{align*}

Next, we get from \eqref{eq:AandAdag} that 
\begin{equation} \label{eq:fAg}  
	f (Ag) - (A^{\dagger} f) g = - \frac{d}{dx} (fg). 
	\end{equation}
Taking $f = \varphi_{n, \lambda}$, $g= \varphi_{m,\mu}$ in \eqref{eq:fAg}, and noting \eqref{eq:Darbouxphi}
and
\[ A^{\dagger} f = A^{\dagger} A g =  (L_{\mu} - (2\lambda_1+1)) g =  c g \]
for some constant $c$, since $g$ is an eigenfunction of $L_{\mu}$, we obtain
\begin{equation} \label{eq:Darbouxenergy} 
	\varphi_{n, \lambda}^2 -  c \varphi_{m, \mu}^2 = 
	- \frac{d}{dx} \left( \varphi_{n, \lambda} \varphi_{m, \mu} \right). \end{equation}
	
Now the proposition follows by induction on $r$. It is true if $r=0$, since then 
$H_{\lambda} \equiv 1$, and \eqref{eq:Pn2}  has no poles at all. Assuming 
the proposition is true for partitions of length $r-1$. Then the term
$c\varphi_{m, \mu}^2$ in \eqref{eq:Darbouxenergy} has zero residues at each of its poles. 
Also the right-hand side of \eqref{eq:Darbouxenergy} has
zero residues since it is the derivative of a meromorphic function. Thus
$\varphi_{n, \lambda}^2$ has zero residues at each of its poles, which proves
the proposition.	
\end{proof}

We are now ready for the proof of Theorem \ref{thm:xzeros}. 

\begin{proof}[Proof of Theorem \ref{thm:xzeros}]
Let $z_1, \ldots, z_{|\lambda|}$ be the zeros of $H_{\lambda}$ and assume
that $z_j$ is a simple zero of $H_{\lambda}$. Without loss of generality we may
assume that $\Im z_j > 0$.
By Proposition \ref{prop:zerores},
\[ \frac{P_n(x)^2}{\prod_{k \neq j} (x-z_k)^2} e^{-x^2}, \] 
then has a zero derivative at $x=z_j$. The logarithmic derivative is zero as
well, which implies
\begin{equation} \label{eq:logderzj} 
	2 \frac{P_n'(z_j)}{P_n(z_j)} - 2 \sum_{k\neq j} \frac{1}{z_j-z_k} - 2 z_j = 0. 
	\end{equation}
As before, let $x_{1,n}, \ldots, x_{n-|\lambda|,n}$  denote the real
zeros of $P_n$, and $z_{1,n},\ldots, z_{|\lambda|,n}$ the exceptional
zeros.  Then \eqref{eq:logderzj} tells us that
\begin{equation} \label{eq:zerobalance} 
	\sum_{k=1}^{n-|\lambda|} \frac{1}{z_j-x_{k,n}} +
\sum_{k=1}^{|\lambda|} \frac{1}{z_j - z_{k,n}} = z_j + \sum_{k \neq j} \frac{1}{z_j-z_k} 
\end{equation}


From  Plancherel-Rotach asymptotics of the Hermite polynomials, see e.g.\ \cite[Theorem 8.22.9]{Szego} 
one easily finds that for any compact interval $[a,b]$ on the real line,
the number of zeros of $H_n$ lying in $[a,b]$ grows roughly like $c \sqrt{n}$,
as $n \to \infty$, for some constant $c > 0$. By Corollary \ref{cor:BD}, the same
holds for the number of zeros of $P_n$ in such an interval.  

Any real zero $x_{k,n}$ in $[\Re z_j - 1, \Re z_j +1]$ has 
$|z_j- x_{k,n}|^2 \leq 1 + (\Im z_j)^2$ and so
\begin{equation} \label{eq:xknestimate} 
	\frac{1}{|z_j - x_{k,n}|^2} \geq \frac{1}{1 + (\Im z_j)^2}. 
	\end{equation}
Then for some constant $c_1 > 0$,
\[ \Im \left( \sum_{k=1}^{n-|\lambda|} \frac{1}{z_j-x_{k,n}} \right) = - \sum_{k=1}^n \frac{\Im z_j}{|z_j-x_{k,n}|^2}
	  < - c_1 \sqrt{n} \]
since all terms in the sum have the same sign and  at least  $c\sqrt{n}$ of them
satisfy \eqref{eq:xknestimate}.
The right-hand side of  \eqref{eq:zerobalance} does not depend on $n$, and so it follows that
for $n$ large enough, 
\[ \sum_{k=1}^{|\lambda|} \Im \left(  \frac{1}{z_j-z_{k,n}} \right) > c_1 \sqrt{n}. \] 
Since the number of terms does not depend on $n$,
at least one of the terms is of order $\sqrt{n}$.  Thus,
for $n$ sufficiently large, there is a non-real zero $z_{k,n}$ of $P_n$ with
\[ \Im \left( \frac{1}{z_j-z_{k,n}} \right) > c_2 \sqrt{n}, \qquad c_2 = \frac{c_1}{|\lambda|}. \]
This is easily seen to imply that for this $k$,
\[ |z_j - z_{k,n}| \leq \frac{1}{c_2 \sqrt{n}}. \] 
In fact, $\Im z_{k,n} > \Im z_j$ and $z_{k,n}$ lies in a circle centred at $z_j +
\frac{i}{2c_2 \sqrt{n}}$ with radius $\frac{1}{2c_2 \sqrt{n}}$.
Theorem \ref{thm:xzeros} is proved.
\end{proof}

\section*{Acknowledgements}
The first author is supported by KU Leuven Research Grant OT/12/073,
the Belgian Interuniversity Attraction Pole P07/18, and FWO Flanders
projects G.0641.11 and G.0934.13.  The second author is supported by
NSERC grant RGPIN-228057-2004.




\end{document}